\theoremstyle{definition}
\theoremstyle{remark}
\numberwithin{equation}{section}
\newcommand{\no}{\noindent}
\newcommand{\realpart}{\mathop{\rm Re}\nolimits}
\newcommand{\ba}{\begin{eqnarray}}
\newcommand{\ea}{\end{eqnarray}}
\newtheorem{Definition}{\bf Definition}[section]
\newtheorem{Thm}[Definition]{\bf Theorem}
\newtheorem{Lem}[Definition]{\bf Lemma}
\newtheorem{Note}[Definition]{\bf Note}
\newtheorem{Prop}[Definition]{\bf Proposition}
\newtheorem{Cor}[Definition]{\bf Corollary}
\begin{document}

\title {The evaluation of a quartic integral via Schwinger, Schur and Bessel}

\author{Tewodros Amdeberhan}
\address{Department of Mathematics,
Tulane University, New Orleans, LA 70118}
\email{tamdeber@tulane.edu}

\author{Victor H. Moll}
\address{Department of Mathematics,
Tulane University, New Orleans, LA 70118}
\email{vhm@math.tulane.edu}

\author{Christophe Vignat}
\address{Laboratoire des Signaux et Systemes, Universite d'Orsay, France}
\email{christophe.vignat@u-psud.fr}

\subjclass{Primary 33C05, 33C10, 33F10}

\date{\today}

\keywords{integrals, hypergeomteric functions, Bessel functions, Schur functions, WZ method}

\begin{abstract}
 We provide additional methods for the evaluation of the integral 
\begin{eqnarray}
N_{0,4}(a;m) & := & \int_{0}^{\infty} \frac{dx}
{\left( x^{4} + 2ax^{2} + 1 \right)^{m+1}} \nonumber
\end{eqnarray}
\noindent
where $m \in {\mathbb{N}}$ and $a \in (-1, \infty)$ in the form
\begin{eqnarray}
N_{0,4}(a;m) & = & \frac{\pi}{2^{m+3/2} (a+1)^{m+1/2} } 
P_{m}(a)  \nonumber
\end{eqnarray}
\noindent
where $P_{m}(a)$ is a polynomial in $a$. The first one is based on a method 
of Schwinger to evaluate integrals appearing in Feynman diagrams, the 
second one is a byproduct of an expression for a rational integral in 
terms of Schur functions. Finally, the third proof, is obtained from an 
integral representation involving modified Bessel functions. 

\end{abstract}

\maketitle


\vskip 20pt 

\section{Introduction} \label{S:intro} 

The definite integral 
\begin{equation}
N_{0,4}(a;m) = \int_{0}^{\infty} \frac{dx}{(x^{4}+2ax^{2}+1)^{m+1}},
\end{equation}
\noindent 
with $m \in \mathbb{N}$ and $a>-1$ has the value 
\begin{equation}
N_{0,4}(a;m)  =   \frac{\pi}{2^{m+3/2} (a+1)^{m+1/2} } 
P_{m}(a)  \label{identity-1}
\end{equation}
\noindent 
where 
\begin{equation}
P_{m}(a) = 2^{-2m} \sum_{k=0}^{m} 2^{k} \binom{2m-2k}{m-k} \binom{m+k}{k} 
(a+1)^{k}.
\label{polyP-def}
\end{equation}

The reader will find in \cite{amram} a survey of the many proofs of 
(\ref{identity-1}) that have appeared in the literature. After this survey 
was written, other proofs have appeared; see  
\cite{apagodu10,berg-vignat, gonzalez3,koutschan1}.  The goal of this note 
is to present some additional proofs: the first one is based in 
the {\em Schwinger  parametrization}, a second one using {\em Schur 
functions} and the last one involves a representation involving  integrals
of {\em Bessel functions}.

\section{Schwinger parametrization} \label{S:Fey} 

The method of Schwinger  parameters,  described in classical quantum
theory books such as \cite{peskin}, is utilized here to give a proof of 
(\ref{identity-1}). A preliminary discussion is presented first.

Starting from \begin{equation}
\frac{1}{A_{1}A_{2}}= 
\int_{0}^{1}\frac{dx}{\left(xA_{1}+\left(1-x\right)A_{2}\right)^{2}}
=\int_{0}^{1} \int_{0}^{1}\frac{\delta\left(1-x_{1}-x_{2}\right)}
{\left(x_{1}A_{1}+x_{2}A_{2}\right)^{2}}dx_{1}dx_{2}
\label{eq:A1A2}\end{equation}
\noindent
where $\delta$ is the Dirac measure, it 
can be deduced by induction that 
\begin{equation}
\frac{1}{\prod_{i=1}^{n}A_{i}}=
\left(n-1\right)!\int_{[0,1]^{n}}
\frac{\delta\left(1-\sum_{i=1}^{n}x_{i}\right)}
{\left(\sum_{i=1}^{n}x_{i}A_{i}\right)^{n}}\prod_{i=1}^{n}dx_{i}.
\label{eq:A1A2An}\end{equation}
The induction step uses the $n$-times differentiated version of \eqref{eq:A1A2},
namely
\begin{equation}
\frac{1}{A_{1}A_{2}^{\nu_{2}}}=
\int_{0}^{1}\int_{0}^{1}\frac{\delta\left(1-x_{1}-x_{2}\right)
\nu_{2}x_{2}^{\nu_{2}-1}}{\left(x_{1}A_{1}+x_{2}A_{2}\right)^{\nu_{2}+1}}
dx_{1}dx_{2}.
\end{equation}
Repeated differentiations of \eqref{eq:A1A2An} yields the more general
result\begin{equation}
\frac{1}{\prod_{i=1}^{n}A_{i}^{\nu_{i}}}=
\frac{\Gamma\left(\nu\right)}{\prod_{i=1}^{n}\Gamma\left(\nu_{i}\right)}
\int_{[0,1]^{n}}
\frac{\delta\left(1-\sum_{i=1}^{n}x_{i}\right)
\prod_{i=1}^{n}x_{i}^{\nu_{i}-1} \, dx_{i}}
{\left(\sum_{i=1}^{n}x_{i}A_{i}\right)^{\nu}}
\label{eq:general formula}
\end{equation}
with $\nu= \nu_{1} + \cdots + \nu_{n}$ and $A_{i} \in \mathbb{C}$.

An alternative proof has been provided by Shapiro 
in his lecture notes \cite{shapiro}. This direct proof of \eqref{eq:A1A2An}
starts with
\begin{equation}
\frac{1}{A}=\int_{0}^{\infty}e^{-Ax}dx
\end{equation}
for $\realpart{A}>0$.  It follows that

\begin{equation}
\prod_{i=1}^{n}\frac{1}{A_{i}}=
\int_{\mathbb{R}_{+}^{n}}e^{-\sum_{i=1}^{n}A_{i}x_{i}}\prod_{i=1}^{n}dx_{i}
\end{equation}
Now let $x=x_{1}+ \cdots x_{n}$ and $y_{i}=\frac{x_{i}}{x}$ to obtain
\begin{equation}
\prod_{i=1}^{n} dx_{i} 
=x^{n-1}dx\prod_{i=1}^{n}\delta\left(1-\sum_{i=1}^{n}y_{i}\right) 
\, dy_{i}
\end{equation}
\noindent
so that
\begin{equation}
\prod_{i=1}^{n}\frac{1}{A_{i}}=
\int_{\mathbb{R}_{+}^{n}}\prod_{i=1}^{n}
\delta\left(1-\sum_{i=1}^{n}y_{i}\right)x^{n-1}dx\, 
e^{-x\sum_{i=1}^{n}A_{i}y_{i}} \, dy_{i}.
\end{equation}
After a change of variable, the latter integral is evaluated as
\begin{equation}
\int_{0}^{\infty} x^{n-1} e^{-x \sum_{i=1}^{n} A_{i}y_{i}} dx = 
\Gamma(n) \left( \sum_{i=1}^{n} A_{i}y_{i} \right)^{-n}
\end{equation}
\noindent
resulting in 
\begin{equation}
\prod_{i=1}^{n}\frac{1}{A_{i}}
=\Gamma\left(n\right) \int_{\mathbb{R}_{+}^{n}}\prod_{i=1}^{n}
\frac{\delta\left(1-\sum_{i=1}^{n}y_{i}\right)}
{\left(\sum_{i=1}^{n}A_{i}y_{i}\right)^{n}} \, dy_{i}.
\end{equation}

Shapiro \cite{shapiro} states: 
\begin{quotation}
An interesting anecdote of physics history is that Schwinger remained
bitter that a virtually identical mathematical trick became commonly
known as Feynman parameters. Why two brilliant physicists, each of
whom had an appropriately won a Nobel prize, should fight over what
is essentially a trivial mathematical trick, is an interesting question
in the sociology of physicists.
\end{quotation}\
An additional remark is that this representation is also known in
probability theory, and was apparently introduced by Mauldon \cite{mauldon}.
The $n$-dimensional Dirichlet distribution 
with parameters $\left\{ \nu_{i}\right\} _{1\le i\le n}$
of a random vector $X$ reads\[
f_{X}\left(x_{1},\dots,x_{n}\right)=\frac{\Gamma\left(\nu\right)}{\prod_{i=1}^{n}\Gamma\left(\nu_{i}\right)}\prod_{i=1}^{n}x_{i}^{\nu_{i}-1}\delta\left(1-\sum_{i=1}^{n}x_{i}\right)\]
with $\nu = \nu_{1} + \ldots + \nu_{n}$. Mauldon  
proved that a Dirichlet distributed random vector $X$ satisfies the following 
equality
\begin{equation}
E_{X}\left(\sum_{i=1}^{n}\lambda_{i}X_{i}\right)^{-\nu}
=\sum_{i=1}^{n}\lambda_{i}^{-\nu_{i}},
\end{equation}
which is exactly \eqref{eq:general formula}.

\medskip

This procedure is now applied to the integral $N_{0,4}(a;m)$ written as 
\begin{equation}
N_{0,4}(a;m) = \int_{0}^{\infty} 
\frac{dx}{(1+ e^{i \theta} x^{2})^{m+1} \, \, 
(1+ e^{-i \theta} x^{2})^{m+1}} \, dx 
\end{equation}
\noindent
with $a = \cos \theta$ and $\theta \in (-\pi,\pi)$. The corresponding 
parameters 
are $n=2$ and $\nu_{1}=\nu_{2} = m+1$. 

Recall the expression for the beta function 
\begin{equation}
B(a,b)  = \int_{0}^{1} x^{a-1}(1-x)^{b-1} \, dx
\end{equation}
\noindent
and its relation to the gamma function
\begin{equation}
B(a,b) = \frac{\Gamma(a+b)}{\Gamma(a) \, \Gamma(b)}
\end{equation}
\noindent
to write
\begin{equation}
N_{0,4}(a;m) = \frac{1}{B(m+1,m+1)} 
\int_{0}^{\infty} \, dx \int_{0}^{1} 
\frac{x_{1}^{m} (1-x_{1})^{m} \, dx_{1}} 
{ \left[ 1 + x^{2} \times  (x_{1}e^{i \theta} + 
(1-x_{1})e^{-i \theta} )\right]^{2m+2}}.
\nonumber
\end{equation}
\noindent
Formula $3.194.3$ in \cite{gr} gives 
\begin{equation}
\int_{0}^{\infty} \frac{dx}{(1+ux^{2})^{\alpha}} = 
\frac{1}{2 \sqrt{u}} B \left( \tfrac{1}{2}, \alpha - \tfrac{1}{2} \right)
\end{equation}
\noindent 
valid for  $u \in \mathbb{C}$ with 
$|\text{Arg}(u) | < \pi$. This yields 
\begin{equation}
N_{0,4}(a;m) = \frac{B(\tfrac{1}{2},2m+\tfrac{3}{2})}{2 B(m+1,m+1)} 
\int_{0}^{1} \frac{x_{1}^{m} (1-x_{1})^{m} \, dx_{1}} 
{ \sqrt{x_{1}e^{i \theta} + (1-x_{1})e^{-i \theta}}}.
\label{formula-10}
\end{equation}

The next lemma provides an evaluation of the integral in (\ref{formula-10}).

\begin{Lem}
Define 
\begin{equation}
I_{m}(z) := \int_{0}^{1} \frac{x^{m}(1-x)^{m} \, dx}{\sqrt{1 + (z^{2}-1)x}}.
\end{equation}
\noindent
Then 
\begin{equation}
I_{m}(z) = \frac{2^{2m+2}}{(m+1) \binom{2m+1}{m} \binom{4m+2}{2m+1} 
(z+1)^{2m+1}} \sum_{k=0}^{m} \binom{2m-2k}{m-k} \binom{m+k}{k} (z+1)^{2k} 
z^{m-k}.
\nonumber
\end{equation}
\end{Lem}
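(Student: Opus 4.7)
My plan is to recast $I_{m}(z)$ as a Gauss hypergeometric function and apply a Legendre-case quadratic transformation that forces termination. First, by Euler's integral representation,
\begin{equation*}
I_{m}(z) = B(m+1,m+1)\,{}_{2}F_{1}\!\left(\tfrac{1}{2},m+1;2m+2;1-z^{2}\right).
\end{equation*}
Since the bottom parameter equals twice the second top parameter, the classical Legendre-case quadratic transformation applies with $a=\tfrac{1}{2}$ and $b=m+1$, yielding
\begin{equation*}
I_{m}(z) = \frac{2\,B(m+1,m+1)}{z+1}\,{}_{2}F_{1}\!\left(\tfrac{1}{2},-m;m+\tfrac{3}{2};\left(\tfrac{z-1}{z+1}\right)^{\!2}\right).
\end{equation*}
The numerator parameter $-m$ truncates the right-hand series to a sum of $m+1$ terms in powers of $((z-1)/(z+1))^{2}$, reducing the lemma to a purely algebraic identity.

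To convert to the target basis $\{(z+1)^{2l}z^{m-l}\}_{l=0}^{m}$, I would expand
\begin{equation*}
(z-1)^{2k} = \bigl((z+1)^{2}-4z\bigr)^{k} = \sum_{j=0}^{k}\binom{k}{j}(-4z)^{j}(z+1)^{2(k-j)}
\end{equation*}
inside the terminating sum, multiply through by $(z+1)^{2m+1}$, reindex by $l=m-j$, and collect contributions monomial by monomial. Writing out the Pochhammer symbols $(\tfrac{1}{2})_{k}(-m)_{k}/(m+\tfrac{3}{2})_{k}k!$ via Legendre's duplication formula reduces each coefficient, for fixed $l$, to a finite inner sum of ratios of factorials indexed by a single summation variable.

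The main obstacle is showing that this inner sum, after normalization by $(m+1)\binom{2m+1}{m}\binom{4m+2}{2m+1}/2^{2m+2}$, equals $\binom{2m-2l}{m-l}\binom{m+l}{l}$. This is a terminating balanced ${}_{3}F_{2}$ identity that can be established by the Pfaff-Saalschütz theorem (after the standard rewriting of binomial ratios into Pochhammer ratios), or verified mechanically by Zeilberger's algorithm. A sanity check at the boundary $l=m$, where the inner sum reduces to a single term involving $\binom{2m}{m}$, together with direct verification for $m=1$ yielding $I_{1}(z)=4(z^{2}+3z+1)/(15(z+1)^{3})$, pins down all constants.
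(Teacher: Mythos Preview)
Your approach is correct and genuinely different from the paper's. The paper expands $(1+(z^{2}-1)x)^{-1/2}$ binomially, integrates term by term to obtain the infinite series
\[
I_{m}(z)=\frac{1}{m+1}\sum_{k\ge 0}\binom{2k}{k}\binom{2m+k+1}{m+1}^{-1}\left(\frac{1-z^{2}}{4}\right)^{k},
\]
and then verifies the closed form by the WZ method: both sides satisfy a common second-order recurrence in $m$, so matching initial values at $m=0,1$ finishes. Your route instead invokes the quadratic transformation for ${}_{2}F_{1}(a,b;2b;\cdot)$ to force termination at the outset, which gives a conceptual reason why the answer is a polynomial in $z$, and reduces the problem to a coefficient identity in an auxiliary index $l$ rather than a recurrence in $m$. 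Both end with a mechanical verification, but yours isolates where the finiteness comes from.

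One refinement: your inner sum is simpler than you claim. After the shift $k=(m-l)+i$, the factor $\binom{k}{m-l}=(m-l+1)_{i}/i!$ cancels against the $(m-l+1)_{i}$ coming from $(m-l+i)!$, and what remains is
\[
{}_{2}F_{1}\!\left(-l,\,m-l+\tfrac{1}{2};\,2m-l+\tfrac{3}{2};\,1\right)=\frac{(m+1)_{l}}{(2m-l+\tfrac{3}{2})_{l}}
\]
by Chu--Vandermonde, not a ${}_{3}F_{2}$ requiring Pfaff--Saalsch\"utz. The remaining check that this closed form, together with the prefactor $2B(m+1,m+1)(-4)^{m-l}(1/2)_{m-l}(-m)_{m-l}/\bigl[(m+3/2)_{m-l}(m-l)!\bigr]$, matches $\binom{2m-2l}{m-l}\binom{m+l}{l}$ times the stated normalization is then a routine application of the duplication formula.
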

\begin{proof}
Expand the integrand in powers of $x$ and integrate. The result is simplified 
using the value for the beta function 
\begin{equation}
B(a,b) = \frac{a+b}{ab} \binom{a+b}{a}^{-1}
\end{equation}
\noindent
for $a, \, b \in \mathbb{N}$. It follows that 
\begin{eqnarray}
I_{m}(z) & = & \sum_{k=0}^{\infty} \binom{-\tfrac{1}{2}}{k} (z^{2}-1)^{k} 
\int_{0}^{1} x^{m+k} (1-x)^{m} \, dx \nonumber \\
& = & \sum_{k=0}^{\infty} \binom{-\tfrac{1}{2}}{k} B(m+k+1,m+1) (z^{2}-1)^{k}
\nonumber \\
& = & \frac{1}{m+1} \sum_{k=0}^{\infty} \binom{2k}{k} \binom{2m+k+1}{m+1}^{-1} 
\left( \frac{1-z^{2}}{4} \right)^{k}.
\nonumber 
\end{eqnarray}
\noindent
The result now follows from the identity
\begin{multline}
 \sum_{k=0}^{\infty} \binom{2k}{k} \binom{2m+k+1}{m+1}^{-1} 
\left( \frac{1-z^{2}}{4} \right)^{k} =  \label{formula-00} \\
\frac{2^{2m+2}}{\binom{2m+1}{m} \binom{4m+2}{2m+1} 
(z+1)^{2m+1}} \sum_{k=0}^{m} \binom{2m-2k}{m-k} \binom{m+k}{k} (z+1)^{2k} 
z^{m-k}.
\end{multline}
\indent
To establish (\ref{formula-00}), the Wilf-Zeilberger automated method 
\cite{aequalsb} is employed. It provides the recurrence 
\begin{multline}
(z^{2}-1)^{2} (4m+9)(4m+7)a_{m+2} - (z^{4}-6z^{2}+1)(2m+6)(2m+3)a_{m+1} - 
\\
4z^{2}(m+2)(m+3)a_{m} = 0
\nonumber
\end{multline}
\noindent
that both sides of (\ref{formula-00}) satisfy. Checking the initial values 
at $m=0$ and $m=1$ is elementary and the proof is complete.
\end{proof}

\smallskip

The evaluation of (\ref{identity-1}) is restated in a slightly different 
form, as the content of the next statement.

\begin{Thm}
Let $m \in \mathbb{N}, \, z = e^{i \theta}$ and $a  = \cos \theta$. Then
\begin{equation}
N_{0,4}(a;m) = \frac{B(\tfrac{1}{2}, 2m+ \tfrac{3}{2})}{2 B(m+1,m+1)} 
\sqrt{z}I_{m}(z)
\end{equation}
\noindent
holds. 
\end{Thm}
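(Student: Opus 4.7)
The plan is to show that the stated identity is essentially an algebraic reformulation of equation~(\ref{formula-10}) matched against the definition of $I_m(z)$ in the preceding Lemma. No new integration is required; the task is simply to recognise the square root in the denominator of (\ref{formula-10}) as a scaled version of the square root in the definition of $I_m$.

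The key manipulation is the factorisation
\[
x_1 e^{i\theta} + (1-x_1)e^{-i\theta} = e^{-i\theta}\bigl(1 + x_1(z^2-1)\bigr),
\]
which is just a rearrangement once one writes $z^2 = e^{2i\theta}$. Taking a suitable branch of the complex square root, this gives
\[
\sqrt{x_1 e^{i\theta} + (1-x_1)e^{-i\theta}} = \frac{1}{\sqrt{z}}\,\sqrt{1 + x_1(z^2-1)}.
\]
Substituting this into the right-hand side of (\ref{formula-10}) pulls the constant factor $\sqrt{z}$ outside the $x_1$-integral, and the remaining integral is precisely $I_m(z)$ as defined in the Lemma. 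This immediately yields the claimed identity.

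The main (and only) obstacle is the bookkeeping of branches for the complex square root. For $a \in (-1,1]$ one may take $\theta \in (-\pi,\pi)$ so that $z = e^{i\theta}$ is well defined, while the case $a > 1$ is recovered by analytic continuation in $a$ (both sides are analytic there). For $x_1 \in [0,1]$, the point $1 + x_1(z^2-1)$ traces the straight segment joining $1$ to $z^2$, which stays away from the negative real axis, so the principal branch is compatible with the factorisation $\sqrt{e^{-i\theta}\,w} = e^{-i\theta/2}\sqrt{w}$ uniformly in $x_1$. Once this is verified, the theorem follows by a direct substitution, with no further calculation.
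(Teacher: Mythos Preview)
Your proof is correct and is exactly the intended argument: the theorem in the paper is simply a restatement of formula~(\ref{formula-10}) once one factors $x_{1}e^{i\theta}+(1-x_{1})e^{-i\theta}=e^{-i\theta}\bigl(1+(z^{2}-1)x_{1}\bigr)$ and recognises the resulting integral as $I_{m}(z)$. Your attention to the branch of the square root is a welcome detail that the paper leaves implicit.
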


\section{A connection with Schur functions} \label{S:schur} 

The integral
\begin{equation}
G_{n}(\mathbf{q}) = \frac{2}{\pi} \int_{0}^{\infty} \prod_{k=1}^{n} 
\frac{1}{x^{2}+q_{k}^{2}} \,dx,
\end{equation}
\noindent
is clearly a symmetric function of the parameters $q_{1}, \ldots, q_{n}$. It has
been expressed in \cite{aems-1} as
\begin{equation}
G_{n}(\mathbf{q}) = \frac{s_{\lambda(n-1)}(\mathbf{q})}
                {e_{n}(\mathbf{q}) \, s_{\lambda(n)}(\mathbf{q})},
\label{schur-1}
\end{equation}
\noindent
where $\lambda(n)$ is the partition $\lambda(n) = (n-1,n-2,\ldots,1)$, 
$e_{n}(\mathbf{q}) = q_{1}q_{2}\cdots q_{n}$ and the Schur function
corresponding to a partition $\mu$ defined by 
\begin{equation}
s_{\mu}(\mathbf{q}) = \frac{a_{\mu+\lambda(n)}(\mathbf{q})}
                           {a_{\lambda(n)}(\mathbf{q})}
\end{equation}
\noindent
with $a_{\mu}(\mathbf{q}) := \text{det}(q_{i}^{\mu_{j}})_{1\leq i,j 
\leq n}$. 

\smallskip

In the current problem, let $a = \tfrac{1}{2}(w^{2}+w^{-2})$ so that
\begin{equation}
N_{0,4}(a;m) =  \int_{0}^{\infty} \frac{dx}{(x^{2}+w^{2})^{m+1} \, 
(x^{2}+w^{-2})^{m+1}}.
\end{equation}
\noindent
Now take the partition $\lambda$ as above 
and the $(2m+2)-$tuple $\mathbf{q}$ given by 
\begin{equation}
\lambda(n) = (n-1, \, n-2, \ldots, 1), \quad \mathbf{q}
 = (w, \ldots, w,w^{-1},\ldots,
w^{-1})
\end{equation}
\noindent
where $\mathbf{q}$ has $m+1$ copies of $w$ and 
$w^{-1}$. The expression (\ref{schur-1}), that appears as 
Theorem 5.1 of \cite{aems-1},
provides the next result. 

\begin{Thm}
\label{thm-schur}
The quartic integral is given by 
\begin{equation}
N_{0,4}(a;m) = \frac{\pi}{2} \frac{s_{\lambda(2m+1)}(\mathbf{q})}
{s_{\lambda(2m+2)}(\mathbf{q})}. 
\end{equation}
\end{Thm}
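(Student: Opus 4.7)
The plan is to apply the general identity (\ref{schur-1}) directly to the specific tuple $\mathbf{q}$ described just before the theorem, then observe that the otherwise unwanted factor $e_n(\mathbf{q})$ equals one.

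First, I would record the factorization
\begin{equation}
x^{4}+2ax^{2}+1 = (x^{2}+w^{2})(x^{2}+w^{-2}),
\nonumber
\end{equation}
which is immediate from the choice $a=\tfrac{1}{2}(w^{2}+w^{-2})$: the middle coefficient is $w^{2}+w^{-2}=2a$ and the constant term is $w^{2}\cdot w^{-2}=1$. Taking $\mathbf{q}$ to consist of $m+1$ copies of $w$ followed by $m+1$ copies of $w^{-1}$, so $n=2m+2$, this factorization gives
\begin{equation}
\prod_{k=1}^{2m+2}(x^{2}+q_{k}^{2}) = (x^{2}+w^{2})^{m+1}(x^{2}+w^{-2})^{m+1} = (x^{4}+2ax^{2}+1)^{m+1},
\nonumber
\end{equation}
so $G_{2m+2}(\mathbf{q}) = \tfrac{2}{\pi}N_{0,4}(a;m)$.

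Next, I would apply (\ref{schur-1}) with this $n$ and $\mathbf{q}$ and compute
\begin{equation}
e_{2m+2}(\mathbf{q}) = w^{m+1}\cdot(w^{-1})^{m+1}=1.
\nonumber
\end{equation}
This collapses the right-hand side of (\ref{schur-1}) to the ratio $s_{\lambda(2m+1)}(\mathbf{q})/s_{\lambda(2m+2)}(\mathbf{q})$, and solving for $N_{0,4}(a;m)$ yields the stated identity.

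The main (minor) obstacle is not algebraic but one of interpretation: the determinantal definition $s_\mu(\mathbf{q})=a_{\mu+\lambda(n)}(\mathbf{q})/a_{\lambda(n)}(\mathbf{q})$ is formally $0/0$ on a tuple with repeated entries, and here half of the entries coincide at $w$ and half at $w^{-1}$. One handles this either by noting that each quotient is in fact a symmetric polynomial, so the value extends by continuity from distinct entries, or by invoking the equivalent Jacobi--Trudi form of $s_\mu$ which is manifestly defined at arbitrary tuples. Once this point is dispatched, the theorem reduces to a one-line specialization of (\ref{schur-1}) combined with the vanishing of $e_{2m+2}(\mathbf{q})-1$.
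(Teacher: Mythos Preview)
Your proposal is correct and follows exactly the paper's approach: specialize the general identity (\ref{schur-1}) to the tuple $\mathbf{q}=(w,\ldots,w,w^{-1},\ldots,w^{-1})$ and observe that $e_{2m+2}(\mathbf{q})=1$. If anything, you supply more detail than the paper, which simply cites (\ref{schur-1}) as providing the result; your remark about the determinantal definition being formally $0/0$ on repeated entries is a valid clarification that the paper leaves implicit.
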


As a corollary, this implies a specialization for the Schur functions. 

\begin{Cor}
If $q = (w, \ldots,w,w^{-1},\ldots,w^{-1})$ is of length $2m+2$, then
\begin{equation}
\frac{
s_{\lambda(2m+1)}(w,\ldots,w,w^{-1},\ldots,w^{-1}) }
{s_{\lambda(2m+3)}(w,\ldots,w,w^{-1},\ldots,w^{-1}) }
=
2^{-2m} \sum_{k=0}^{m} \binom{2k}{k} \binom{2m-k}{m} 
\frac{1}{(w+w^{-1})^{2k+1}}.
\nonumber
\end{equation}
\end{Cor}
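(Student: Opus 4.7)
The plan is to extract the corollary by equating the closed-form expression (\ref{identity-1}) for $N_{0,4}(a;m)$ with the Schur-function expression from Theorem \ref{thm-schur}. Canceling the common factor of $\pi/2$ yields
\[
\frac{s_{\lambda(2m+1)}(\mathbf{q})}{s_{\lambda(2m+2)}(\mathbf{q})} = \frac{P_{m}(a)}{2^{m+1/2}(a+1)^{m+1/2}},
\]
where $\mathbf{q} = (w,\ldots,w,w^{-1},\ldots,w^{-1})$ has $m+1$ copies of each, and $a = \tfrac{1}{2}(w^{2}+w^{-2})$. So everything reduces to an algebraic simplification, plus a shift of the partition in the denominator.

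For the simplification, I would exploit the identity $a+1 = \tfrac{1}{2}(w+w^{-1})^{2}$, which gives $2^{m+1/2}(a+1)^{m+1/2} = (w+w^{-1})^{2m+1}$ and converts each factor $(a+1)^{k}$ in (\ref{polyP-def}) into $(w+w^{-1})^{2k}/2^{k}$. The factor $2^{k}$ in (\ref{polyP-def}) cancels, leaving
\[
P_{m}(a) = 2^{-2m}\sum_{k=0}^{m}\binom{2m-2k}{m-k}\binom{m+k}{k}(w+w^{-1})^{2k}.
\]
Re-indexing by $j = m-k$ turns the binomial pair $\binom{2m-2k}{m-k}\binom{m+k}{k}$ into $\binom{2j}{j}\binom{2m-j}{m}$, and dividing through by $(w+w^{-1})^{2m+1}$ produces exactly the sum on the right-hand side of the corollary.

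The remaining step is to reconcile the denominator $s_{\lambda(2m+2)}$ obtained above with the $s_{\lambda(2m+3)}$ stated in the corollary. The key observation is that, viewed as partitions of length $2m+2$, one has $\lambda(2m+3) = \lambda(2m+2) + (1^{2m+2})$, and appending a full column of height $n$ to a partition multiplies the Schur function in $n$ variables by $e_{n}(\mathbf{q})$. Our tuple has $e_{2m+2}(\mathbf{q}) = w^{m+1}\cdot w^{-(m+1)} = 1$, so $s_{\lambda(2m+3)}(\mathbf{q}) = s_{\lambda(2m+2)}(\mathbf{q})$ and the identity follows.

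I do not anticipate a real obstacle: the only genuine calculation is the re-indexing of the binomial sum, and the Schur-function shift is a standard consequence of the bialternant definition. The argument is essentially a dictionary translation between the polynomial $P_{m}(a)$ in the variable $a$ and the symmetric function on the tuple $\mathbf{q}$.
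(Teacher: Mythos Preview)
Your proposal is correct and follows exactly the route the paper intends: the corollary is stated there without proof, as an immediate consequence of equating Theorem~\ref{thm-schur} with the closed form~(\ref{identity-1}) and rewriting $P_{m}(a)$ via $a+1=\tfrac{1}{2}(w+w^{-1})^{2}$. Your handling of the shift from $s_{\lambda(2m+2)}$ to $s_{\lambda(2m+3)}$ via $e_{2m+2}(\mathbf{q})=1$ is the right way to reconcile the denominator in the theorem with the one in the stated corollary, and in fact this is precisely how the factor $e_{n}(\mathbf{q})\,s_{\lambda(n)}(\mathbf{q})$ in~(\ref{schur-1}) collapses to $s_{\lambda(n+1)}(\mathbf{q})$ in general.
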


Theorem \ref{thm-schur} has a natural generalization to integrals of the form 
\begin{equation}
\int_{0}^{\infty} \frac{dx}{Q_{2n}(x)^{m}},
\nonumber
\end{equation}
\noindent
where 
\begin{equation}
Q_{2n}(x) = x^{2n} + a_{1}x^{2n-2} + a_{2}x^{2n-4} + \cdots + a_{2}x^{4} + 
a_{1}x^{2} + 1
\end{equation}
\noindent
is a {\em palindromic polynomial}. These polynomials factor as 
\begin{equation}
Q_{2n}(x) = (x^{2}+w_{1}^{2})(x_{2}+w_{1}^{-2}) \cdots 
(x^{2}+w_{n}^{2})(x_{2}+w_{n}^{-2}).
\end{equation}

Consider the partition $\lambda$ and the $n$-concatenation 
${\mathbf{q}} = (q_{1}, \ldots, q_{n})$, of the $2m$-tuples $q_{j}$, by 
\begin{equation}
\lambda(s) = (s-1,s-2,\ldots,1), \text{   and   }
q_{j} = (w_{j},\ldots,w_{j};w_{j}^{-1},\ldots,w_{j}^{-1})
\end{equation}
\noindent
where $q_{j}$ has $m$ copies of each $w_{j}$ and $w_{j}^{-1}$. A direct 
application of (\ref{schur-1})  gives the next result.

\begin{Prop}
Preserving the notation from above,
\begin{equation}
\int_{0}^{\infty} \frac{dx}{Q_{2n}(x)^{m}} = 
\frac{\pi}{2} \frac{s_{\lambda(2nm-1)}(q)}{s_{\lambda(2nm)(q)}}.
\end{equation}
\end{Prop}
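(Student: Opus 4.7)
The plan is to apply the Schur-function integral formula (\ref{schur-1}) directly, with the multiset $\mathbf{q}$ chosen so that the product over $k$ in the denominator reproduces $Q_{2n}(x)^m$.

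First, I would rewrite
\begin{equation}
Q_{2n}(x)^{m} = \prod_{j=1}^{n} (x^{2}+w_{j}^{2})^{m}(x^{2}+w_{j}^{-2})^{m},
\nonumber
\end{equation}
which is a product of exactly $N := 2nm$ factors of the form $x^{2}+q_{k}^{2}$, where the $q_{k}$'s run through the concatenated tuple $\mathbf{q}=(q_{1},\ldots,q_{n})$ consisting, for each $j$, of $m$ copies of $w_{j}$ followed by $m$ copies of $w_{j}^{-1}$. Hence the integrand matches exactly the form of $G_{N}(\mathbf{q})$ from Section \ref{S:schur} with $N=2nm$.

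Next, invoking (\ref{schur-1}) with this choice of $N$ and $\mathbf{q}$ gives
\begin{equation}
\int_{0}^{\infty}\frac{dx}{Q_{2n}(x)^{m}}
=\frac{\pi}{2}\,G_{2nm}(\mathbf{q})
=\frac{\pi}{2}\cdot\frac{s_{\lambda(2nm-1)}(\mathbf{q})}{e_{2nm}(\mathbf{q})\, s_{\lambda(2nm)}(\mathbf{q})}.
\nonumber
\end{equation}
The key simplification is then to observe that
\begin{equation}
e_{2nm}(\mathbf{q}) = \prod_{j=1}^{n}\bigl(w_{j}^{m}\cdot w_{j}^{-m}\bigr)=1,
\nonumber
\end{equation}
because each $w_{j}$ appears paired with its inverse $w_{j}^{-1}$ the same number of times. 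This reduces the formula to the stated expression.

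The substantive content of the argument is already contained in Theorem 5.1 of \cite{aems-1}; here there is essentially nothing to prove beyond matching notations. The only places where mistakes could creep in are bookkeeping: making sure the total length of $\mathbf{q}$ really is $2nm$ (so that the partition indices $\lambda(2nm-1)$ and $\lambda(2nm)$ agree with the version of (\ref{schur-1}) one is applying), and checking the palindromic cancellation in $e_{2nm}(\mathbf{q})$. No further computation is required, which is why the statement is presented as a direct application.
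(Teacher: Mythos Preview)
Your proposal is correct and follows exactly the paper's approach: the paper states only that this proposition is ``a direct application of (\ref{schur-1}),'' and you have spelled out that application, including the observation that $e_{2nm}(\mathbf{q})=1$ because each $w_j$ is paired with $w_j^{-1}$. If anything, you have supplied more detail than the paper itself.
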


\section{A connection with Bessel functions} \label{S:Bessel} 

The quartic integral (\ref{identity-1}) 
is evaluated in this section using the modified 
Bessel function $K_{\alpha}(x)$. The classical Bessel function 
$J_{\alpha}(x)$ is usually defined via the differential equation
\begin{equation}
\frac{d^{2}y}{dx^{2}} + \frac{1}{x} \frac{dy}{dx} - 
\left( 1 + \frac{\alpha^{2}}{x^{2}} \right) y = 0.
\label{diff-bes}
\end{equation}
\noindent
The modified function $I_{\alpha}(x) = e^{-\alpha \pi i/2} J_{\alpha}
(xe^{\pi i/2})$ is also real valued. In the case $\alpha \not \in 
\mathbb{Z}$, the functions $I_{\alpha}(x)$ and $I_{-\alpha}(x)$ form a basis
for the solutions of (\ref{diff-bes}). To deal with the situation of 
integer parameters, it is convenient to define
\begin{equation}
K_{\alpha}(x) = \frac{\pi}{2 \, \sin \pi \alpha} 
\left[ I_{-\alpha}(x) - I_{\alpha}(x) \right].
\end{equation}
\noindent
This is the modified Bessel function of second kind, also called 
MacDonald function. 

The proof of (\ref{identity-1}) proceeds along the following lines: the first 
step is to obtain an expression for $N_{0,4}(a,m)$ as an integral involving 
$K_{1/4}(x)$. An entry in \cite{prudnikov1}  gives a hypergeometric form of 
this integral. The final step is to prove a hypergeometric identity that 
transforms this form to a result in \cite{bomohyper}. Details about Bessel 
functions can be found in \cite{andrews3} and chapter 10 of \cite{nist}. 

\smallskip

The identity 
\begin{equation}
\int_{0}^{\infty} e^{-c u} u^{m} \, du  = 
\frac{\Gamma(m+1)}{c^{m+1}},
\end{equation}
\noindent
valid for $\realpart{c}>0$, is used with $c = x^{4}+2ax^{2}+1$ to produce 
\begin{equation}
\frac{1}{(x^{4} + 2ax^{2} + 1)^{m+1}} = 
\frac{1}{\Gamma(m+1)} \int_{0}^{\infty} e^{-u} u^{m} 
e^{-u(x^{4}+2ax^{2})} \, du.
\end{equation}
\noindent
Integration yields 
\begin{equation}
N_{0,4}(a;m) = \frac{1}{\Gamma(m+1)} \int_{0}^{\infty} e^{-u} 
u^{m} \int_{0}^{\infty} e^{-u(x^{4}+2ax^{2})} \, du \, dx. 
\end{equation}
\noindent
The restriction $\realpart{c}>0$ is satisfied by taking $-1 <  a < 1$.

Entry $3.469.1$ in \cite{gr} gives 
\begin{equation}
\int_{0}^{\infty} e^{-\mu x^{4} - 2 \nu x^{2}} \, dx = 
\frac{1}{4} \sqrt{\frac{2 \nu}{\mu}} \text{exp} \left( \frac{\nu^{2}}
{2 \mu} \right) K_{\small{1/4}} \left( \frac{\nu^{2}}{2 \mu} \right).
\end{equation}

The choice
$\mu = u$ and $\nu = a u$ yields:

\begin{Lem}
\label{lemma-bes1}
The quartic integral $N_{0,4}(a;m)$ is given by
\begin{equation}
N_{0,4}(a;m) = \frac{2^{m - \tfrac{1}{2}}}{\Gamma(m+1) \, a^{2m+ \tfrac{3}{2}}} 
\int_{0}^{\infty} t^{m} e^{-bt} K_{\small{1/4}}(t) \, dt,
\end{equation}
\noindent
where $b = 2/a^{2}-1$. 
\end{Lem}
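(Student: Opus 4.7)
The plan is to substitute the Gradshteyn--Ryzhik evaluation (formula $3.469.1$, recalled just above the lemma) into the double-integral representation of $N_{0,4}(a;m)$ displayed immediately before the statement, thereby closing the inner $x$-integral in terms of $K_{1/4}$, and then to rescale the outer variable once to put the answer into the advertised form.

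First, I would apply formula $3.469.1$ with $\mu=u$ and $\nu=au$. With this choice the prefactor $\sqrt{2\nu/\mu}$ collapses to the $u$-independent constant $\sqrt{2a}$, while the argument $\nu^2/(2\mu)$ simplifies to $a^2u/2$, which is linear in $u$. Plugging into the double integral and pulling out all constants gives an expression of the shape
\[
N_{0,4}(a;m)=\frac{\sqrt{2a}}{4\,\Gamma(m+1)}\int_{0}^{\infty} u^{m}\,e^{-u(1-a^{2}/2)}\,K_{1/4}(a^{2}u/2)\,du,
\]
where the net exponential in $u$ arises by combining the $e^{-u}$ from the Laplace identity with the $e^{\nu^{2}/(2\mu)}=e^{a^{2}u/2}$ produced by $3.469.1$.

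Second, I would perform the change of variables $t=a^{2}u/2$. Under this substitution the exponent $-u(1-a^{2}/2)$ becomes $-t(2/a^{2}-1)=-bt$, matching the definition of $b$ exactly, the argument of $K_{1/4}$ becomes $t$, and $u^{m}\,du$ contributes $2^{m+1}a^{-2m-2}t^{m}\,dt$. Consolidating this with the outside factor $\sqrt{2a}/4$ yields the constant $2^{m-1/2}/a^{2m+3/2}$, which together with $1/\Gamma(m+1)$ matches the statement.

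I do not expect a conceptual obstacle; the only thing requiring care is bookkeeping the powers of $2$ and $a$ through the rescaling. The mildest subtlety is that $3.469.1$ is normally stated for $\nu>0$, so strictly speaking the computation above is immediate for $0<a<1$; the range $-1<a\le 0$ is then recovered by analytic continuation in $a$, using that both sides of the claimed identity are holomorphic in $a$ on a complex neighborhood of the real interval $(-1,1)$ (note $b=2/a^{2}-1>0$ throughout this interval, so the $t$-integral converges uniformly on compacta).
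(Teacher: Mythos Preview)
Your proposal is correct and follows exactly the route the paper takes: the text preceding the lemma already sets up the double integral and quotes entry $3.469.1$, and the paper's entire argument is the single line ``The choice $\mu=u$ and $\nu=au$ yields'' the lemma. You have simply carried out the bookkeeping that the paper leaves implicit --- the substitution $t=a^{2}u/2$ and the consolidation of the powers of $2$ and $a$ --- and your constants check out.
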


This expression can be evaluated using \cite[vol.2, 2.16.6.2]{prudnikov1}
\begin{equation}
\int_{0}^{\infty}x^{\alpha-1}e^{-px}K_{\nu}\left(cx\right)dx
=\frac{\left(2c\right)^{\nu}\sqrt{\pi}}{\left(p+c\right)^{\alpha+\nu}}
\frac{\Gamma(\alpha-\nu) \, \Gamma(\alpha+\nu)}{\Gamma(\alpha + \tfrac{1}{2})}
\,_{2}F_{1}\left(\begin{array}{c}
\alpha+\nu,\nu+\frac{1}{2}\\
\alpha+\frac{1}{2}\end{array}\Big{\vert}\frac{p-c}{p+c}\right)
\nonumber
\end{equation}
valid for $\realpart{(c+p)}>0,\,\,\realpart{\alpha} > 
\vert \realpart{\nu} \vert.$  This yields the expression
\begin{equation}
N_{0,4}(a;m) = 
\frac{\sqrt{\pi} a}{2 \sqrt{2} m!} 
\frac{\Gamma\left(m+\tfrac{3}{4}\right) \Gamma(m + \tfrac{5}{4})}
{\Gamma\left(m+ \tfrac{3}{2} \right)}
\,_{2}F_{1}\left(\begin{array}{c}
m+\frac{5}{4},\frac{3}{4}\\
m+\frac{3}{2}\end{array} \Big{\vert} \, 1-a^{2}\right).
\end{equation}

Using \cite[15.3.24]{abramowitz1} this can be written as

\begin{equation}
N_{0,4}\left(a;m\right) 
=\frac{\sqrt{\pi}}{2}\frac{\Gamma\left(2m+\frac{3}{2}\right)}
{\Gamma\left(2m+2\right)}\,_{2}F_{1}\left(\begin{array}{c}
2m+\frac{3}{2},\frac{1}{2}\\
m+\frac{3}{2}\end{array}\Big{\vert}\frac{1-a}{2}\right).
\label{original}
\end{equation}

\smallskip

The proof of (\ref{identity-1}) is now based on the representation
\begin{equation}
N_{0,4}\left(a;m\right) 
= \frac{2^{m-\tfrac{1}{2}}}{(a+1)^{m+1/2}} B\left( 2m + \tfrac{3}{2}, 
\tfrac{1}{2} \right) 
\,_{2}F_{1}\left(\begin{array}{c}
-m,m+1\\
m+\frac{3}{2}\end{array}\Big{\vert}\frac{1-a}{2}\right)
\end{equation}
\noindent
described in \cite{bomohyper}. In particular, the polynomial $P_{m}(a)$
is identified there as the Jacobi polynomial $P_{m}^{(m+1/2,-m-1/2)}(a)$. 

Matching  both representations for $N_{0,4}(a;m)$ shows that (\ref{identity-1})
follows from the next result.

\begin{Prop}
\label{hyper-iden1}
The identity 
\begin{equation}
_{2}F_{1}\left(\begin{array}{c}
-m,m+1\\
m+\frac{3}{2}\end{array}\Big{\vert} z \right) = 
(1-2z) (1-z)^{m+\tfrac{1}{2}} \, \,  
_{2}F_{1}\left(\begin{array}{c}
m + \tfrac{5}{4}, \tfrac{3}{4}\\
m+\frac{3}{2}\end{array}\Big{\vert} \, 4z(1-z) \right)
\label{hyp-0}
\end{equation}
\noindent
holds.
\end{Prop}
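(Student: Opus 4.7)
The plan is to chain together Euler's transformation
\[
_{2}F_{1}(a,b;c;w) = (1-w)^{c-a-b}\, _{2}F_{1}(c-a, c-b; c; w)
\]
and the classical quadratic transformation
\[
_{2}F_{1}(a, b; a+b+\tfrac{1}{2}; 4z(1-z)) = \, _{2}F_{1}(2a, 2b; a+b+\tfrac{1}{2}; z),
\]
formula 15.3.16 of \cite{abramowitz1}, so as to reduce (\ref{hyp-0}) to a tautology.

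First I would apply Euler to the hypergeometric factor on the right-hand side of (\ref{hyp-0}) with parameters $(a,b,c) = (m+\tfrac{5}{4}, \tfrac{3}{4}, m+\tfrac{3}{2})$, for which $c-a-b = -\tfrac{1}{2}$. This rewrites it as $(1-4z(1-z))^{-1/2}\, _{2}F_{1}(\tfrac{1}{4}, m+\tfrac{3}{4}; m+\tfrac{3}{2}; 4z(1-z))$. The key algebraic observation is $1 - 4z(1-z) = (1-2z)^2$, so for $0 \le z < \tfrac{1}{2}$ the new factor $(1-4z(1-z))^{-1/2}$ equals $(1-2z)^{-1}$ and cancels precisely the $(1-2z)$ prefactor in (\ref{hyp-0}). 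What remains to prove is the cleaner identity
\[
_{2}F_{1}(-m, m+1; m+\tfrac{3}{2}; z) = (1-z)^{m+\tfrac{1}{2}}\, _{2}F_{1}(\tfrac{1}{4}, m+\tfrac{3}{4}; m+\tfrac{3}{2}; 4z(1-z)).
\]

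Next, the parameters $(\tfrac{1}{4}, m+\tfrac{3}{4})$ satisfy $\tfrac{1}{4} + (m+\tfrac{3}{4}) + \tfrac{1}{2} = m+\tfrac{3}{2}$, so the quadratic transformation applies to the remaining $_{2}F_{1}$ and converts it to $\,_{2}F_{1}(\tfrac{1}{2}, 2m+\tfrac{3}{2}; m+\tfrac{3}{2}; z)$. One last application of Euler, this time with $c-a-b = -m-\tfrac{1}{2}$, produces exactly the factor $(1-z)^{-m-1/2}$ needed to cancel $(1-z)^{m+1/2}$ and to deliver $\,_{2}F_{1}(-m, m+1; m+\tfrac{3}{2}; z)$, which is the left-hand side of (\ref{hyp-0}).

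No serious obstacle is expected. The one nontrivial observation is that the prefactor $(1-2z)$ in (\ref{hyp-0}) is precisely $\sqrt{1-4z(1-z)}$, a coincidence that dovetails exactly with the change of variable $w = 4z(1-z)$ and with Euler's transformation. The remainder is routine bookkeeping with well-known hypergeometric identities, and the restriction $0 \le z < \tfrac{1}{2}$ used to fix the branch of the square root extends to the full range by analytic continuation of both sides.
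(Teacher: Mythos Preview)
Your proof is correct and takes a genuinely different route from the paper. The paper establishes (\ref{hyp-0}) algorithmically: it invokes the Wilf--Zeilberger method to show that both sides obey the second-order recurrence (\ref{recur-11}) in $m$, and then verifies the initial conditions $m=0,1$ by checking that each side solves the appropriate hypergeometric differential equation with value $1$ at $t=0$. Your argument, by contrast, is a purely classical chain of known transformations: one Euler transformation to trade $(m+\tfrac{5}{4},\tfrac{3}{4})$ for $(\tfrac{1}{4},m+\tfrac{3}{4})$ and absorb the $(1-2z)$ prefactor via $1-4z(1-z)=(1-2z)^{2}$, then the Gauss quadratic transformation at $c=a+b+\tfrac{1}{2}$ to collapse the argument $4z(1-z)$ to $z$, and a final Euler transformation to recover the terminating parameters $(-m,m+1)$. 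This is more elementary and more informative---it explains \emph{why} the identity holds, namely as a disguised instance of the classical quadratic transformation---whereas the paper's approach is machine-certifiable and requires no human pattern-matching among the many ${}_{2}F_{1}$ transformations.
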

\begin{proof}
The WZ-method shows that both sides satisfy the recurrence
\begin{multline}
(4m+7)(4m+9)za_{m+2} + (2m+3)(2m+5)(4z^{2}-4z-1)a_{m+1} \label{recur-11} \\ 
-(2m+3)(2m+5)(z-1)a_{m} = 0.
\end{multline}

To complete the proof, it suffices to check two initial values. The required 
identities for $m=0$ and $m=1$ are written in terms of 
$t = 4z(1-z)$ in the form
\begin{equation}
\label{init-1}
_{2}F_{1}\left(\begin{array}{c}
\tfrac{5}{4}, \tfrac{3}{4}\\
\frac{3}{2}\end{array}\Big{\vert} \, t \right)
= \frac{\sqrt{2}}{(1-t)^{1/2} \, \sqrt{1 + \sqrt{1-t}}}
\end{equation}
\noindent
and 
\begin{equation}
\label{init-2}
_{2}F_{1}\left(\begin{array}{c}
\tfrac{9}{4}, \tfrac{3}{4}\\
\frac{5}{2}\end{array}\Big{\vert} \, t \right)
= \frac{2 \sqrt{2} (3 + 2 \sqrt{1-t})}{5(1+\sqrt{1-t})^{3/2} \, \sqrt{1-t}}.
\end{equation}

The proofs of these identities are elementary using the fact that the 
hypergeometric differential equation
\begin{equation}
t(1-t) y'' - \left[ c - (a+b+1)t \right]y' - aby = 0. 
\end{equation}
\noindent
has a unique solution analytic at $t=0$ with initial value $y(0)=1$. Indeed, 
both sides of (\ref{init-1}) satisfy 
\begin{equation}
t(1-t)y'' + \left( \tfrac{3}{2} - 3t \right)y' - \tfrac{15}{16}y = 0
\end{equation}
\noindent
with value $y(0)=1$. This establishes (\ref{init-1}). The same holds for 
(\ref{init-2}) using 
\begin{equation}
t(1-t)y'' + \left( \tfrac{5}{2} - 4t \right)y' - \tfrac{27}{16}y = 0
\end{equation}
\end{proof}

\begin{Note}
The identity in Proposition \ref{hyper-iden1} can also be established using 
C. Koutschan package \texttt{HolonomicFunctions} \cite{Koutschan10b}. Denote 
the left hand side of (\ref{hyp-0}) by $F1$ and the right hand side by $F2$. 
The command 
\begin{center}
\texttt{Annihilator}$[F1,S[m]]$
\end{center}
\noindent
 finds the operator
\begin{equation}
(7+4m)(9+4m)zS_{m}^{2} + 
(3 +2m)(5+2m)(-1-4z+4z^{2})S_{m} - 
(3+2m)(5+2m)(-1+z)
\nonumber
\end{equation}
\noindent
with $S_{m}$ being the shift in the discrete parameter $m$; that is, 
$S_{m}g(m;x) = g(m+1;x)$. The 
package claims that this 
operator annihilates the left hand side of (\ref{hyp-0}). This is
precisely the recurrence (\ref{recur-11}) obtained before. The command 
\begin{center}
\texttt{Annihilator}$[F1,S[m]]$ == 
\texttt{Annihilator}$[F2,S[m]]$
\end{center}
\noindent
returns \texttt{True}, showing that the right 
hand side $F2$ satisfies the same recurrence. The initial conditions can also
be determined automatically. 
\end{Note}

\section{A second proof of the main identity via  Bessel functions} \label{S:Bessel1} 

The previous section contains the identity (\ref{lemma-bes1}) giving the 
quartic integral $N_{0,4}(a,m)$ as an integral involving Bessel functions:
\begin{equation}
N_{0,4}(a;m) = \frac{2^{m - \tfrac{1}{2}}}{\Gamma(m+1) \, a^{2m+ \tfrac{3}{2}}} 
\int_{0}^{\infty} t^{m} e^{-bt} K_{\small{1/4}}(t) \, dt.
\label{form-00}
\end{equation}
\noindent
The table of integrals \cite{gr} contains, as Entry $6.611.3$, the 
special case $m=0$:
\begin{equation}
\int_{0}^{\infty} e^{-bt} K_{\small{1/4}}(t) \, dt = 
\frac{\pi}{\sqrt{2} \, \sqrt{b^{2}-1}} 
\left[ (b + \sqrt{b^{2}-1})^{1/4} - (b+\sqrt{b^{2}-1})^{-1/4} \right]
\end{equation}
\noindent
that can be written as 
\begin{equation}
\int_{0}^{\infty} e^{-bt} K_{\small{1/4}}(t) \, dt = 
\frac{\pi}{2} \frac{a^{3/2}}{\sqrt{1+a}},
\label{bes-11}
\end{equation}
\noindent 
using $b = a^{2}/2-1$. Differentiating 
$m$ times with respect to $b$, it follows that 
\begin{equation}
N_{0,4}(a,m) = \frac{(-1)^{m} 2^{m-1/2}}{m! \, a^{2m+3/2}}
\frac{\partial^{m}}{\partial b^{m}} \left( \frac{\pi}{2} 
\frac{a^{3/2}}{\sqrt{1+a}}\right).
\label{bes-12}
\end{equation}

The proof of (\ref{identity-1}) is thus reduced to finding an analytic 
expression for the derivatives in (\ref{bes-12}). 

\begin{Prop}
There exists a polynomial $Q_{m}(a)$ such that 
\begin{equation}
\frac{\partial^{m}}{\partial b^{m}} 
\left(\frac{a^{3/2}}{\sqrt{1+a}} \right) 
= \frac{(-1)^{m} m!}{2^{2m}} \frac{a^{2m+3/2}}
{(1+a)^{m+1/2}} Q_{m}(a).
\end{equation}
\end{Prop}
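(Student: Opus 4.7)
The proof will be by induction on $m$. Since $b = 2/a^{2}-1$ (from Lemma 4.1), the chain rule converts the $b$-derivative into the single-variable operator
\[
\frac{\partial}{\partial b} = -\frac{a^{3}}{4}\,\frac{d}{da}.
\]
The base case $m=0$ is immediate with $Q_{0}(a)=1$.

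For the inductive step, assume that for some $m$ the stated formula holds with $Q_{m}$ a polynomial. Apply $-(a^{3}/4)\,d/da$ to the right-hand side of the induction hypothesis. The product rule gives
\[
\frac{d}{da}\!\left[\frac{a^{2m+3/2}}{(1+a)^{m+1/2}}Q_{m}(a)\right] = \frac{a^{2m+1/2}}{(1+a)^{m+3/2}}\Bigl[(2m+\tfrac{3}{2})(1+a)Q_{m}(a)-(m+\tfrac{1}{2})a\,Q_{m}(a)+a(1+a)Q_{m}'(a)\Bigr],
\]
and multiplying by $-a^{3}/4$ produces the prefactor $-a^{2m+7/2}/[4(1+a)^{m+3/2}]$, which is exactly the rational shape required on the right-hand side for $m+1$. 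Matching with the target expression $\tfrac{(-1)^{m+1}(m+1)!\,a^{2m+7/2}}{2^{2m+2}(1+a)^{m+3/2}}Q_{m+1}(a)$ forces the recurrence
\[
(m+1)\,Q_{m+1}(a) = (2m+\tfrac{3}{2})(1+a)Q_{m}(a) - (m+\tfrac{1}{2})a\,Q_{m}(a) + a(1+a)Q_{m}'(a).
\]
Since the right-hand side is a polynomial in $a$ whenever $Q_{m}$ is, and $m+1$ is a nonzero integer, $Q_{m+1}$ is again a polynomial, closing the induction.

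The only real step is the bookkeeping observation that, after applying the product rule, the three resulting terms share the common prefactor $a^{2m+1/2}/(1+a)^{m+3/2}$, so that the structural shape of the identity is preserved under $\partial/\partial b$. No genuine difficulty arises: the induction is essentially a verification that the operator $-(a^{3}/4)\,d/da$ carries the model space $\{a^{2m+3/2}(1+a)^{-m-1/2}\cdot(\text{polynomial in }a)\}$ into its counterpart at level $m+1$. As a sanity check, the recurrence yields $Q_{1}(a)=\tfrac{3}{2}+a$ and $Q_{2}(a)=\tfrac{21}{8}+\tfrac{15}{4}a+\tfrac{3}{2}a^{2}$, which agree with $P_{1}(a)$ and $P_{2}(a)$ computed from (\ref{polyP-def}); this identifies $Q_{m}$ with the polynomial $P_{m}$ appearing in (\ref{identity-1}).
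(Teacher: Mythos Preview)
Your proof is correct and follows essentially the same approach as the paper: induction on $m$, using the chain rule with $\partial a/\partial b=-a^{3}/4$ to derive a recurrence showing $Q_{m+1}$ is a polynomial whenever $Q_{m}$ is. Your recurrence $(m+1)Q_{m+1}=(2m+\tfrac{3}{2})(1+a)Q_{m}-(m+\tfrac{1}{2})aQ_{m}+a(1+a)Q_{m}'$ is algebraically identical to the paper's form $2(m+1)Q_{m+1}=[2(m+1)(a+1)+(2m+1)]Q_{m}+[2(a+1)^{2}-2(a+1)]Q_{m}'$; note, however, that your closing sanity check ($Q_{1}=P_{1}$, $Q_{2}=P_{2}$) is only evidence, not a proof, of $Q_{m}=P_{m}$---the paper establishes that separately in the next theorem by verifying that $P_{m}$ satisfies the same recurrence.
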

\begin{proof}
The case $m=0$ holds with $Q_{0}(m) = 1$.  To complete 
the inductive step, it is shown that if $Q_{m}(a)$ is a polynomial then the 
function $Q_{m+1}(a)$ defined by the relation 
\begin{equation}
\frac{\partial}{\partial b} 
\left( 
\frac{(-1)^{m} m!}{2^{2m}} \frac{a^{2m+3/2}}
{(1+a)^{m+1/2}} Q_{m}(a) \right) = 
\frac{(-1)^{m+1} (m+1)!}{2^{2m+2}} \frac{a^{2m+7/2}}
{(1+a)^{m+3/2}} Q_{m+1}(a),
\label{def-tn}
\end{equation}
\noindent
is also a polynomial. 

The chain rule and
$\frac{\partial a}{\partial b} 
= - \tfrac{1}{4}a^{3}$ show that (\ref{def-tn}) is equivalent to 
\begin{multline}
2(m+1)Q_{m+1}(a) = 
\left[2(m+1)(a+1) + (2m+1) \right]Q_{m}(a) + \label{rec-Tm} \\
\left[ 2(a+1)^{2} - 2(a+1) \right]Q_{m}'(a).
\end{multline}
\noindent
Thus $Q_{m+1}(a)$ is a polynomial in $a$.
\end{proof}

The proof of (\ref{identity-1}) is now reduced to checking that the polynomial
$Q_{m}$ in the previous lemma is given by $P_{m}(a)$ defined 
in (\ref{polyP-def}). 

\begin{Thm}
The polynomial $Q_{m}(a)$ is the same as $P_{m}(a)$.
\end{Thm}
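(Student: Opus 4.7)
The plan is to exploit the fact that the recurrence (\ref{rec-Tm}), together with the initial value $Q_{0}(a)=1$, determines the sequence $\{Q_{m}\}$ uniquely. It therefore suffices to check that $P_{m}(a)$ satisfies the same recurrence and the same initial condition. The equality $P_{0}(a)=1$ is immediate from (\ref{polyP-def}), since only the $k=0$ term survives and produces $\binom{0}{0}\binom{0}{0}=1$.

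For the recurrence, I would work in the basis $\{(a+1)^{k}\}$, writing
$P_{m}(a)=\sum_{k=0}^{m}c_{k}^{(m)}(a+1)^{k}$ with
$c_{k}^{(m)}:=2^{k-2m}\binom{2m-2k}{m-k}\binom{m+k}{k}$,
and substituting into (\ref{rec-Tm}). The two operators $(a+1)\,\cdot$ and $[2(a+1)^{2}-2(a+1)]\,\partial_{a}$ act cleanly as index shifts on this basis, so comparing coefficients of $(a+1)^{j}$ collapses the recurrence to the single three-term relation
\[
2(m+1)\,c_{j}^{(m+1)} \;=\; 2(m+j)\,c_{j-1}^{(m)} + (2m-2j+1)\,c_{j}^{(m)}
\]
for each $0\le j\le m+1$, with the convention that $c_{k}^{(m)}$ vanishes outside $0\le k\le m$.

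To verify this binomial identity I would cancel the common prefactor $2^{j-2m-1}$ and use the elementary relations
$\binom{m+j+1}{j}=\frac{m+j+1}{m+1}\binom{m+j}{j}$ and
$\binom{m+j-1}{j-1}=\frac{j}{m+j}\binom{m+j}{j}$
to express everything in terms of $\binom{m+j}{j}$ and the pair of central-type binomials $\binom{2m-2j+2}{m-j+1}$, $\binom{2m-2j}{m-j}$. After dividing by $\binom{m+j}{j}$, the identity reduces, with $p=m-j+1$, to the classical relation $p\binom{2p}{p}=2(2p-1)\binom{2p-2}{p-1}$, which is immediate from the factorial definition.

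The main obstacle is the bookkeeping of the boundary indices $j=0$ and $j=m+1$, where several of the shifted binomial terms either vanish or must be handled in accordance with the zero convention above; once these endpoints are checked, the induction closes and $Q_{m}=P_{m}$ for all $m$. As an alternative in the same spirit as the rest of the paper, one could instead feed the explicit sum (\ref{polyP-def}) and the recurrence (\ref{rec-Tm}) to the WZ method or to Koutschan's \texttt{HolonomicFunctions} package and let the automated creative-telescoping verify the three-term relation directly.
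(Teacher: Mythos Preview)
Your proposal is correct and follows essentially the same approach as the paper: verify that $P_{m}$ satisfies the recurrence (\ref{rec-Tm}) by comparing coefficients of $(a+1)^{k}$, reducing the question to an elementary binomial identity. Your write-up is in fact a bit cleaner---you combine the four raw terms into the compact two-term relation $2(m+1)c_{j}^{(m+1)}=2(m+j)c_{j-1}^{(m)}+(2m-2j+1)c_{j}^{(m)}$ and then reduce explicitly to $p\binom{2p}{p}=2(2p-1)\binom{2p-2}{p-1}$, whereas the paper leaves the four terms separate and simply divides through by $\binom{2m-2k}{m-k}\binom{m+k}{m}$---but the argument is the same.
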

\begin{proof}
It suffices to check that 
$P_{m}(a)$ satisfies the same recurrence as $Q_{m}(a)$. To this end, compare 
the coefficients of $(1+a)^{k}$ on both sides of (\ref{rec-Tm}). The result 
is equivalent to 
\begin{eqnarray} 
2^{k} (m+1) \binom{2m-2k+2}{m-k+1} \binom{m+k+1}{m+1} & = & 
2^{k+1} (m+1)\binom{2m-2k+2}{m-k+1} \binom{m+k-1}{m}  \nonumber \\
& + &  2^{k+1} (2m+1)\binom{2m-2k}{m-k} \binom{m+k}{m}  \nonumber \\
& + &  2^{k+1} (k-1)\binom{2m-2k+2}{m-k+1} \binom{m+k-1}{m}  \nonumber \\
& - &  2^{k+2} k\binom{2m-2k}{m-k} \binom{m+k}{m}. \nonumber 
\end{eqnarray}

\noindent
This however can routinely be verified. Simply 
divide through by $\binom{2m-2k}{m-k} \binom{m+k}{m}$
and the statement reduces to a simple polynomial identity. 
\end{proof}

\medskip

A small variation of this  proof of (\ref{identity-1}) is obtained by 
differentiating 
\begin{equation}
\int_{0}^{\infty} e^{-bt} K_{\tfrac{1}{4}}(t) \, dt = 
\frac{\pi}{2^{1/4}(b+1)^{1/2} \, \sqrt{\sqrt{2} + \sqrt{b+1}}}.
\end{equation}
directly with respect to the parameter $b$. The first few 
examples suggest the next result.

\begin{Lem}
There are polynomials $S_{m}, \, T_{m}$ such that
\begin{multline}
\frac{d^{m}}{db^{m}} 
\left( \frac{1}{(b+1)^{1/2} \, \sqrt{\sqrt{2} + \sqrt{b+1}}} \right) = 
\label{formula-77} \\
(-1)^{m} \frac{S_{m}(b) + \sqrt{b+1} T_{m}(b)}
{2^{2m} (b+1)^{m+1/2} \, (\sqrt{2} + \sqrt{b+1})^{m+1/2}}.
\end{multline}
\end{Lem}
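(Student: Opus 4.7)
The plan is induction on $m$. The base case $m=0$ holds with $S_{0}(b)=1$ and $T_{0}(b)=0$. For convenience I set $u=\sqrt{b+1}$, so that $u'(b)=1/(2u)$, and the relation $u^{2}=b+1$ will be used repeatedly to absorb residual factors of $1/u$.

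Assume the formula at level $m$. Differentiating the inductive form via the quotient rule, the numerator $S_{m}+uT_{m}$ contributes $S_{m}'+uT_{m}'+T_{m}/(2u)$, while the logarithmic derivative of the denominator $u^{2m+1}(\sqrt 2+u)^{m+1/2}$ produces a factor of the shape $(2m+1)(2\sqrt 2+3u)/\bigl(4u^{2}(\sqrt 2+u)\bigr)$. Multiplying through by the target denominator $4u^{2m+3}(\sqrt 2+u)^{m+3/2}$, every fractional power of $\sqrt 2+u$ cancels, and the relation $u^{2}=b+1$ clears all residual $1/u$ factors. What survives is an identity of the shape $-2u(S_{m+1}+uT_{m+1})=A(b)+uB(b)$, where $A$ and $B$ are explicit linear combinations of $S_{m},T_{m},S_{m}',T_{m}'$ whose coefficients are polynomials in $b$ (with $\sqrt 2$ allowed).

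Matching the rational and $u$-parts separately gives the recurrences $S_{m+1}(b)=-B(b)/2$ and $T_{m+1}(b)=-A(b)/\bigl(2(b+1)\bigr)$, so the induction closes provided $A(b)$ is divisible by $b+1$. This divisibility holds because every term contributing to $A$ carries either an explicit factor $(b+1)$ or a factor $u^{2}=b+1$ after collection. The resulting $S_{m+1}$ and $T_{m+1}$ are therefore polynomials in $b$ with coefficients in $\mathbb{Q}[\sqrt 2]$, completing the induction. A sanity check at $m=1$ gives $S_{1}(b)=2\sqrt 2,\,T_{1}(b)=3$, matching direct differentiation.

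The main obstacle is clerical rather than conceptual: carefully tracking how the chain-rule factor $1/(2u)$, the logarithmic derivative of $u^{2m+1}(\sqrt 2+u)^{m+1/2}$, and the derivative of $S_{m}+uT_{m}$ combine, and confirming the advertised divisibility of $A(b)$ by $b+1$. As an alternative, one could bypass the hand computation by feeding a bivariate ansatz into Koutschan's \texttt{HolonomicFunctions} package, as in the note following Proposition \ref{hyper-iden1}, and letting the algorithm produce the recurrences for $S_{m}$ and $T_{m}$.
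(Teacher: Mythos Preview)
Your proof is correct and follows essentially the same induction-and-differentiate strategy as the paper, which records the explicit recurrences for $S_{m+1}$ and $T_{m+1}$ in terms of $S_m,T_m,S_m',T_m'$ (and these match your sanity check $S_1=2\sqrt{2}$, $T_1=3$). One minor point: with a slightly cleaner normalization---matching $-(S_{m+1}+uT_{m+1})$ rather than $-2u(S_{m+1}+uT_{m+1})$---the rational and $u$-parts yield $S_{m+1}$ and $T_{m+1}$ as polynomials directly, so the divisibility of $A(b)$ by $b+1$ that you flag is an artifact of the extra $2u$ and need not be argued separately.
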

\begin{proof}
The proof is by induction on $m$, and is obtained upon differentiating the 
stated expression for the $m$-th derivative. The base case $m=0$ is obvious.
Assume (\ref{formula-77}) holds for $m$. Then differentiating the right
hand side of (\ref{formula-77}) generates the recurrence
\begin{eqnarray}
S_{m+1}(b) & = & 2 \sqrt{2}(2m+1) S_{m}(b) \label{recu-ST} \\
 & & -(b+1) \left[ 4 \sqrt{2} S_{m}'(b) - 
(1+6m) T_{m}(b) + 4(b+1)T_{m}'(b) \right] \nonumber \\
T_{m+1}(b) & = & 3(2m+1)S_{m}(b) + 4 \sqrt{2}m T_{m}(b) \nonumber \\
 & & - 4(b+1) \left( S_{m}'(b) + \sqrt{2} T_{m}'(b) \right), \nonumber 
\end{eqnarray}
\noindent
where $b = 2/a^{2}-1$. The initial condition $S_{0}(b)=1$ and $T_{0}(b)=0$ 
yield the result.
\end{proof}

\begin{Lem}
Let $m' = \lfloor{ m/2 \rfloor}$. Define 
\begin{equation}
U_{m}(b) = \frac{2^{-2m}}{(1+b)^{m'}} 
\sum_{k=0}^{m} 2^{k} \binom{2m-2k}{m-k} \binom{m+k}{m} 
\sum_{j=0}^{\lfloor{k/2 \rfloor}} \binom{k}{2j} 2^{j} (1+b)^{m'-j} 
\nonumber
\end{equation}
\noindent
and 
\begin{equation}
V_{m}(b) = \frac{2^{-2m}}{(1+b)^{m'}} 
\sum_{k=0}^{m} 2^{k} \binom{2m-2k}{m-k} \binom{m+k}{m} 
\sum_{j=0}^{\lfloor{(k-1)/2 \rfloor}} \binom{k}{2j+1} 2^{j} (1+b)^{m'-j}.
\nonumber
\end{equation}
\noindent
Then 
\begin{equation}
P_{m}(a) = U_{m}(b) + a V_{m}(b).
\end{equation}
\end{Lem}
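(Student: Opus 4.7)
The identity $P_m(a)=U_m(b)+aV_m(b)$ (under the substitution $b=2/a^{2}-1$) is really just a bookkeeping statement: it separates $P_m(a)$ into its even and odd parts in $a$, re-expressed via $b$. So my plan is a direct substitution argument rather than induction or WZ.

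First, I would record the consequences of $b=2/a^{2}-1$: one has $1+b=2/a^{2}$, hence
\begin{equation}
a^{2j}=\frac{2^{j}}{(1+b)^{j}}.
\nonumber
\end{equation}
This is the only analytic input needed; everything else is combinatorial.

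Next, I would expand $(a+1)^{k}=\sum_{\ell=0}^{k}\binom{k}{\ell}a^{\ell}$ and split the sum according to the parity of $\ell$:
\begin{equation}
(a+1)^{k}=\sum_{j=0}^{\lfloor k/2\rfloor}\binom{k}{2j}a^{2j}+a\sum_{j=0}^{\lfloor(k-1)/2\rfloor}\binom{k}{2j+1}a^{2j}.
\nonumber
\end{equation}
Substituting $a^{2j}=2^{j}(1+b)^{-j}$ and then multiplying numerator and denominator by $(1+b)^{m'}$ yields
\begin{equation}
(a+1)^{k}=\frac{1}{(1+b)^{m'}}\!\!\sum_{j=0}^{\lfloor k/2\rfloor}\!\!\binom{k}{2j}2^{j}(1+b)^{m'-j}+\frac{a}{(1+b)^{m'}}\!\!\sum_{j=0}^{\lfloor(k-1)/2\rfloor}\!\!\binom{k}{2j+1}2^{j}(1+b)^{m'-j}.
\nonumber
\end{equation}
Plugging this into the definition (\ref{polyP-def}) of $P_{m}(a)$, using $\binom{m+k}{k}=\binom{m+k}{m}$, and interchanging the order of summation separates $P_m(a)$ into two pieces whose coefficients of $1$ and $a$ match verbatim the definitions of $U_m(b)$ and $V_m(b)$.

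There is essentially no obstacle: the only thing to watch is that the auxiliary factor $(1+b)^{m'}$ in the statement of $U_{m}$ and $V_{m}$ is cosmetic — it enters and cancels as $(1+b)^{m'}/(1+b)^{m'}=1$ — so the $m'=\lfloor m/2\rfloor$ convention plays no role in the equality, only in the way the polynomial expressions are presented. Thus the entire proof reduces to the parity split of $(a+1)^{k}$ followed by the substitution $a^{2}=2/(1+b)$.
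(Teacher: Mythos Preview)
Your argument is correct and is essentially the same as the paper's: both proofs amount to splitting $(1+a)^{k}$ into its even and odd parts in $a$ and then rewriting $a^{2j}$ in terms of $b$ via $a^{2}=2/(1+b)$. The paper phrases the parity split as $(1+a)^{k}=\tfrac12[(1+a)^{k}+(1-a)^{k}]+\tfrac12[(1+a)^{k}-(1-a)^{k}]$ rather than via the binomial expansion, but this is the same decomposition you use.
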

\begin{proof}
The polynomial $P_{m}(a)$ 
is decomposed into its even and odd part using 
\begin{equation}
(1+a)^{k} = \frac{1}{2} \left[ (1+a)^{k} + (1-a)^{k} \right] +
            \frac{1}{2} \left[ (1+a)^{k} - (1-a)^{k} \right].
\end{equation}
\noindent
The result follows.
\end{proof}

\smallskip

The proof of (\ref{identity-1}) now reduces to expressing the polynomials 
$S_{m}(b)$ and $T_{m}(b)$ in terms of $U_{m}(b)$ and $V_{m}(b)$. This is 
given by 
\begin{equation}
T_{m}(b) = \frac{m! 2^{3m/2}}{\sqrt{2} a^{m-1}} 
\times \begin{cases}
U_{m}(b) & \quad \text{ if } m \text{ is odd} \\
aV_{m}(b) & \quad \text{ if } m \text{ is even}
\end{cases}
\end{equation}
\noindent
and
\begin{equation}
S_{m}(b) = \frac{m! 2^{3m/2}}{\sqrt{2} a^{m}} 
\times \begin{cases}
aV_{m}(b) & \quad \text{ if } m \text{ is odd} \\
U_{m}(b) & \quad \text{ if } m \text{ is even}.
\end{cases}
\end{equation}
\noindent
The details are elementary and are left to the reader.

\begin{Note}
The identity (\ref{bes-12}) yields
\begin{equation}
N_{0,4}(a,m) = \frac{\pi \, a^{m-1}}{m! 2^{5m/2+3/2} \, (a+1)^{m+1/2}}
\left[ \sqrt{2} T_{m}(b) + S_{m}(b) \right].
\label{form-99}
\end{equation}
\end{Note}

\smallskip
\medskip

\section{Conclusions} \label{S:conc} 

The value of the definite integral
\begin{equation}
N_{0,4}(a;m) = \int_{0}^{\infty} \frac{dx}{(x^{4}+2ax^{2}+1)^{m+1}},
\nonumber
\end{equation}
\noindent 
is given by 
\begin{equation}
N_{0,4}(a;m)  =   \frac{\pi}{2^{m+3/2} (a+1)^{m+1/2} } 
P_{m}(a)  
\nonumber
\end{equation}
\noindent 
where 
\begin{equation}
P_{m}(a) = 2^{-2m} \sum_{k=0}^{m} 2^{k} \binom{2m-2k}{m-k} \binom{m+k}{k} 
(a+1)^{k}.
\nonumber
\end{equation}

\medskip

Several proofs of this identity appeared in the literature. The current 
work includes proofs relating this identity to Bessel functions, Schur 
polynomials and a method of Schwinger for the evaluation of definite 
integrals.

\medskip

\no
{\bf Acknowledgements}. The authors wish to thank Armin Straub for 
comments on C. Koutschan package. The work of the second author was 
partially supported by NSF-DMS 0070567. C. Vignat thanks V. Moll for his
invitation to visit Tulane in July 2010. \\

\end{document}